 \newtheorem{thm}{Theorem}[section]
 \newtheorem{cor}[thm]{Corollary}
 \newtheorem{lem}[thm]{Lemma}
 \newtheorem{prop}[thm]{Proposition}
 \newtheorem{defn}[thm]{Definition}
\theoremstyle{remark}
 \numberwithin{equation}{section}
\begin{document}

\title[Subgroup S--commutativity degree of finite groups]
 {Subgroup S--commutativity degree of finite groups}


\author[D. E. Otera]{Daniele Ettore Otera}
\address{D\'epartement de Math\'ematique,Universit\'e Paris-Sud 11\\
Batiment 425, Facult\'e de Science d'Orsay\\
F-91405, Orsay Cedex, France} \email{daniele.otera@math-psud.fr}

\author[F. G. Russo]{Francesco G. Russo}
\address{
Department of Mathematics, University of Palermo\\
Via Archirafi 34, 90123, Palermo, Italy.
} \email{francescog.russo@yahoo.com}


\keywords{Subgroup commutativity degree,  sublattices, abelian groups.}

\subjclass[2010]{Primary: 06B23; Secondary: 20D60.}

\date{\today}

\begin{abstract}
The so--called subgroup commutativity degree $sd(G)$ of a finite group $G$ is the number of permuting subgroups $(H,K) \in \mathrm{L}(G) \times \mathrm{L}(G)$, where $\mathrm{L}(G)$ is the subgroup lattice of $G$, divided by $|\mathrm{L}(G)|^2$. It allows us to measure how $G$ is far from the celebrated classification of quasihamiltonian groups of K. Iwasawa. Here we generalize $sd(G)$, looking at suitable sublattices of $\mathrm{L}(G)$, and show some new lower bounds.  
\end{abstract} 

\maketitle

\section{Introduction}
All groups in the present paper are supposed to be finite. Given two subgroups $H$ and $K$ of a
group $G$, the product $HK = \{hk \ | \ h \in H, k \in K\}$ is not always a subgroup of $G$. $H$ and $K$
\textit{permute}  if $HK=KH$, or equivalently, if $HK$ is a subgroup of $G$. $H$ is said to be
\textit{permutable} (or \textit{quasinormal}) in $G$, if it permutes with every subgroup of $G$. It is possible
to strengthen this notion in various ways.   $H$  is
\textit{S--permutable} (or \emph{S--quasinormal}) in $G$, if $H$ permutes with all Sylow  subgroups of $G$
(for all primes in the set $\pi(G)$ of the prime divisors of $|G|$). Historically, O. Kegel introduced S--permutable subgroups in 1962, generalizing a well--known result of O. Ore of 1939, who proved that permutable subgroups
are subnormal (see \cite{ls,rs} for details). Roughly speaking, this notion deals with subgroups which are permutable with maximal subgroups. Several authors investigated the topic in the successive years and we mention \cite{a,bh,ps,rs}  for our aims.

The \textit{subgroup lattice} $\mathrm{L}(G)$ of a group $G$  is the set of all subgroups of $G$ and is a complete bounded lattice with respect to the set inclusion, having initial element the
trivial subgroup $\{1\}$ and final element $G$ itself (see \cite{gg,rs}). Its binary operations $\wedge, \vee$ are
defined by $X \wedge Y=X\cap Y$, $X\vee Y=\langle X \cup Y\rangle$, for all $X,Y\in
\mathrm{L}(G)$. Furthermore,  $\mathrm{L}(G)$ is \textit{modular}, if all the subgroups of $G$ satisfy the
\textit{modular law}. $G$ is  modular, if $\mathrm{L}(G)$ is modular (see \cite[Section
2.1]{rs}). This notion is important, because of the following concept. A group $G$ is \textit{quasihamiltonian},
if all its subgroups are permutable. By a result of K. Iwasawa  \cite[Theorem 2.4.14]{rs}, quasihamiltonian
groups are classified, but, at the same time, these groups are
characterized to be nilpotent and modular (see \cite[Exercise 3, p.87]{rs}).


Now we recall some terminology from \cite{ps}, which will be useful in the rest of the paper. Any non--empty set of subgroups $\mathrm{S}(G)$ of $G$ may be always regarded  as a sublattice of $\mathrm{L}(G)$ having initial element ${\underset{S \in \mathrm{S}(G)}\bigwedge} S$ and final element  ${\underset{S \in \mathrm{S}(G)}\bigvee} S$. The symbol $\mathrm{S}^\perp(G)$ denotes the set of all subgroups $H$ of $G$  which are permutable with all $S \in \mathrm{S}(G)$ and it is easy to check that
$\mathrm{S}^\perp(G)$ is a sublattice  of  $\mathrm{L}(G)$ (see \cite[Section 1]{ps}). There is a wide literature when we choose
$\mathrm{S}(G)$ to be equal to the sublattice $\mathrm{M}(G)$ of all maximal subgroups of $G$, or to
the sublattice $\mathrm{sn}(G)$ of all subnormal subgroups of $G$, or also to
the sublattice $\mathrm{n}(G)$ of all normal subgroups of $G$. Consequently, 
$\mathrm{L}^\perp(G)$ is the sublattice of all permutable subgroups of $G$, 
$\mathrm{M}^\perp(G)$ that of the subgroups permutable with all maximal subgroups of $G$ and so on for
$\mathrm{sn}^\perp(G)$ and $\mathrm{n}^\perp(G)=\mathrm{L}(G)$. Immediately, the role of the operator $\perp$ appears to be very intriguing for the structure of $G$ and several authors investigated this aspect. For instance, $G$ is  quasihamiltonian if and only if $\mathrm{L}(G)=\mathrm{L}^\perp(G)$.

In Section 2 we will describe a notion of probability on $\mathrm{L}(G)$, beginning from groups  in which the subgroups in $\mathrm{sn}(G)$ permutes with those in $\mathrm{M}(G)$. The generality of the methods  (we follow \cite{cms,dn,elr,r1,mst,nr,r2,r3,mt}) may be translated in terms of arbitrary sublattices, satisfying a prescribed restriction.  Section 3 shows some consequences on the size of $|\mathrm{L}(G)|$.

\section{Measure theory on subgroup lattices}
The following notion has analogies with \cite[Definitions 2.1,3.1,4.1]{elr} and \cite[Equation 1.1]{r2} 
and will be treated as in \cite{cms,dn,elr,r1,mst,nr,r2,r3,mt}.

\begin{defn}\label{d:1}For a group $G$,
\begin{equation}\label{ed:1}spd(G)= \frac{|\{(X,Y)\in \mathrm{sn}(G) \times \mathrm{M}(G) \ | \ XY=YX\}|}{|\mathrm{sn}(G)| \ |\mathrm{M}(G)|},\end{equation}  is the subgroup S--commutativity  degree of $G$.
\end{defn}
$0<spd(G)\le 1$ denotes the  probability that a randomly picked pair $(X,Y) \in \mathrm{sn}(G) \times \mathrm{M}(G)$ is permuting, that is, $XY=YX$.  \eqref{ed:1} may be rewritten, introducing
the function $\chi :
\mathrm{sn}(G) \times \mathrm{M}(G) \rightarrow \{0,1\}$ defined by
\begin{equation}\chi(X,Y)=\left\{\begin{array}{lcl} 1,&\,\,& \mathrm{if} \ XY=YX,\\
0,&\,\,& \mathrm{if} \ XY\not= YX,\end{array}\right.\end{equation} in the following form
\begin{equation}\label{e:temp3}
spd(G)= \frac{1}{|\mathrm{sn}(G)| \ |\mathrm{M}(G)|} {\underset{(X,Y)\in \mathrm{sn}(G) \times \mathrm{M}(G)} \sum}\chi(X,Y).
\end{equation}
In Definition \ref{d:1} and \eqref{e:temp3}, we may replace $\mathrm{sn}(G) \times \mathrm{M}(G)$ with $\mathrm{S}(G) \times \mathrm{T}(G) $, where $\mathrm{S}(G)$ and $\mathrm{T}(G)$ are two arbitrary sublattices of $\mathrm{L}(G)$. We have chosen $\mathrm{sn}(G) \times \mathrm{M}(G)$,  because  \cite{a,bh} describe the structure of the groups in which the subnormal subgroups permute with all Sylow subgroups (called $PST$--\textit{groups}). If $\mathrm{Syl}(G)$ is the set of all Sylow subgroups of $G$, $\mathrm{Syl}(G)\subseteq \mathrm{M}(G)$ and this means that we have already a classification for a group $G$ such that $\mathrm{sn}(G) \subseteq \mathrm{Syl}(G)^\perp$.

\eqref{e:temp3} allows us to  to treat the problem from the point of view of the measure theory on groups. A computational advantage may be found in a formula for $spd(G_1 \times G_2)$, where $G_1$ and $G_2$ are two given groups. 
\begin{cor}\label{c:1} Let $G_i$ be a family of groups of coprime orders for $i=1,2,\ldots,k$. Then
$spd(G_1 \times G_2 \times \ldots \times G_k)=spd(G_1) \ spd(G_2) \ \ldots \ spd(G_k).$ 
\end{cor}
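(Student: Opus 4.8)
The plan is to reduce at once to the case of two factors by induction on $k$: once the identity $spd(G_1 \times G_2) = spd(G_1)\,spd(G_2)$ is established for groups of coprime order, the general case follows by applying it to $G_1$ and to $G_2 \times \cdots \times G_k$, whose orders are again coprime, and invoking the inductive hypothesis. So I would fix $G = A \times B$ with $\gcd(|A|,|B|)=1$ and compare the three quantities $|\mathrm{sn}(G)|$, $|\mathrm{M}(G)|$ and the numerator of \eqref{ed:1} with their analogues for $A$ and $B$.

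The structural backbone is the classical fact that coprimality splits the subgroup lattice: every subgroup $H \le A \times B$ equals $(H \cap A) \times (H \cap B)$, so $\mathrm{L}(A \times B) \cong \mathrm{L}(A) \times \mathrm{L}(B)$ as lattices, and two subgroups satisfy $(H_1 \times H_2)(K_1 \times K_2) = (H_1 K_1) \times (H_2 K_2)$. Hence the indicator $\chi$ factors coordinatewise: $H_1 \times H_2$ permutes with $K_1 \times K_2$ in $G$ if and only if $H_1$ permutes with $K_1$ in $A$ and $H_2$ permutes with $K_2$ in $B$. I would next record that this splitting respects subnormality, i.e. $\mathrm{sn}(A \times B) = \{X_1 \times X_2 \ | \ X_1 \in \mathrm{sn}(A),\ X_2 \in \mathrm{sn}(B)\}$, giving $|\mathrm{sn}(G)| = |\mathrm{sn}(A)|\,|\mathrm{sn}(B)|$; this part parallels the computations of \cite{elr,r2} carried out for the full lattice.

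The delicate step, and the one I expect to be the main obstacle, concerns the sublattice $\mathrm{M}(G)$. Under the coprime splitting a subgroup is maximal in $A \times B$ exactly when one factor is the whole group and the other is maximal, so that $\mathrm{M}(A \times B) = \{M_1 \times B \ | \ M_1 \in \mathrm{M}(A)\} \cup \{A \times M_2 \ | \ M_2 \in \mathrm{M}(B)\}$, a \emph{union} rather than a direct product of $\mathrm{M}(A)$ and $\mathrm{M}(B)$. This is precisely where the clean indicator-factorization available for $sd(G)$ does not transfer directly, since the sum in \eqref{e:temp3} no longer ranges over a product index set. My plan is therefore to split the count of permuting pairs along these two families, using that $X_1 \times X_2$ permutes with $M_1 \times B$ if and only if $X_1$ permutes with $M_1$ in $A$ (the $B$-coordinate being automatic, as $X_2 B = B$), and symmetrically for $A \times M_2$. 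Carrying out this bookkeeping expresses the numerator of $spd(G)$ in terms of the numerators of $spd(A)$ and $spd(B)$, and the final task is to verify that, after dividing by $|\mathrm{sn}(G)|\,|\mathrm{M}(G)|$, these contributions assemble into $spd(A)\,spd(B)$. Reconciling the additive structure of $\mathrm{M}(G)$ with the multiplicative target is the crux of the argument and the point I would scrutinize most carefully.
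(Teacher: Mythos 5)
Your structural analysis is correct, and it is exactly where the trouble lies: for $\gcd(|A|,|B|)=1$ every subgroup of $G=A\times B$ splits as $(H\cap A)\times(H\cap B)$, subnormality splits coordinatewise so that $|\mathrm{sn}(G)|=|\mathrm{sn}(A)|\,|\mathrm{sn}(B)|$, and $\mathrm{M}(G)$ is the \emph{union} $\{M_1\times B \mid M_1\in\mathrm{M}(A)\}\cup\{A\times M_2\mid M_2\in\mathrm{M}(B)\}$. But the last step of your plan --- verifying that the two families' contributions ``assemble into $spd(A)\,spd(B)$'' --- is not a delicate point to be checked: carrying out your own bookkeeping refutes the statement. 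The first family contributes $|\mathrm{sn}(B)|\cdot|\mathrm{sn}(A)|\,|\mathrm{M}(A)|\,spd(A)$ permuting pairs, the second contributes $|\mathrm{sn}(A)|\cdot|\mathrm{sn}(B)|\,|\mathrm{M}(B)|\,spd(B)$, and the denominator is $|\mathrm{sn}(A)|\,|\mathrm{sn}(B)|\bigl(|\mathrm{M}(A)|+|\mathrm{M}(B)|\bigr)$, so that
\[
spd(A\times B)=\frac{|\mathrm{M}(A)|\,spd(A)+|\mathrm{M}(B)|\,spd(B)}{|\mathrm{M}(A)|+|\mathrm{M}(B)|},
\]
a weighted \emph{average}, not a product. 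For numbers in $(0,1]$ such an average equals the product only when $spd(A)=spd(B)=1$ (or one group is trivial). A concrete counterexample: $spd(A_4)=\tfrac{18}{30}=\tfrac35$, since each of the three subnormal involution subgroups of $A_4$ permutes with $V_4$ but with none of the four maximal subgroups of order $3$; taking $B=\mathbb{Z}_5$, the formula above (or a direct count of the $48$ permuting pairs among the $12\cdot 6$ pairs in $\mathrm{sn}(G)\times\mathrm{M}(G)$) gives $spd(A_4\times\mathbb{Z}_5)=\tfrac{5\cdot\frac35+1}{6}=\tfrac23\neq\tfrac35=spd(A_4)\,spd(\mathbb{Z}_5)$.

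So the honest conclusion of the scrutiny you proposed is that Corollary \ref{c:1}, read with Definition \ref{d:1}, is false --- not that the bookkeeping is merely intricate. There is also nothing in the paper to reconcile your computation with: the proof is omitted there as a ``straightforward application'' of \eqref{e:temp3}, and that omission conceals precisely the obstruction you identified. Multiplicativity does hold for $sd(G)$, because in that case the index set $\mathrm{L}(G)^2\cong\bigl(\mathrm{L}(A)\times\mathrm{L}(B)\bigr)^2$ is a genuine product over which $\chi$ factors coordinatewise; for $spd(G)$ the asymmetric index set $\mathrm{sn}(G)\times\mathrm{M}(G)$ destroys the product structure on the $\mathrm{M}$ side. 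The best true statement your method yields is the averaging formula displayed above, which in particular still gives the inequality $spd(A\times B)\geq spd(A)\,spd(B)$, with equality exactly in the degenerate cases noted.
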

The techniques of proof are straightforward applications of \eqref{e:temp3} and the details are omitted. However, it is good to note that Corollary \ref{c:1} shows the stability  with respect to forming direct products of $spd(G)$: this  fact was proved in \cite{cms,dn,elr,r1,mst,r2,r3,mt} in different contexts. Another basic property is to relate $spd(G)$ to quotients  and subgroups of $G$.

Let $G=NH$ for a normal subgroup $N$ of $G$ and a subgroup $H$ of $G$ isomorphic to $G/N$ (briefly, $H \simeq G/N$). In general, it is easy to check that $\mathrm{sn}(G/N)$ is lattice isomorphic to $\mathrm{sn}(H)$ (briefly, $\mathrm{sn}(G/N) \sim \mathrm{sn}(H)$) and that $\mathrm{M}(G/N) \sim \mathrm{M}(H)$. 
We will concentrate  on some special classes of groups, satisfying 
\begin{equation}\label{a:1}
\Big(\mathrm{sn}(G/N) \times \mathrm{M}(G/N)\Big) \sim \Big(\mathrm{sn}(H) \times \mathrm{M}(H)\Big) \subseteq \mathrm{sn}(G) \times \mathrm{M}(G)\end{equation}
\begin{equation}\label{a:2}  \mathrm{sn}(N) \times \mathrm{M}(N) \subseteq \mathrm{sn}(G) \times \mathrm{M}(G).
\end{equation} 
\eqref{a:1}--\eqref{a:2}, jointly with \eqref{e:temp3}, allow us to conclude
\begin{equation}\label{lb1}   \sum_{(X,Y)\in \mathrm{sn}(G) \times \mathrm{M}(G)} \chi(X,Y) \ge  \sum_{(X,Y)\in \mathrm{sn}(N) \times \mathrm{M}(N)} \chi(X,Y);\end{equation}
{\small
\begin{equation}\label{lb2}  \sum_{(X,Y)\in \mathrm{sn}(G) \times \mathrm{M}(G)}\ge  {\underset{(X/N,Y/N)\in \mathrm{sn}(G/N) \times \mathrm{M}(G/N)} \sum} \chi(X/N,Y/N)=\sum_{(Z,T)\in \mathrm{sn}(H) \times \mathrm{M}(H)} \chi(Z,T)\end{equation}
}
and  consequently 
\begin{equation}\label{lb3} 2 |\mathrm{sn}(G)| \ |\mathrm{M}(G)| \ spd(G)\ge   \sum_{(X,Y)\in  \mathrm{sn}(N) \times \mathrm{M}(N)} \chi(X,Y) + \sum_{(Z,T)\in  \mathrm{sn}(H) \times \mathrm{M}(H)} \chi(Z,T).\end{equation}
\[=    |\mathrm{sn}(N)| \ |\mathrm{M}(N)| \ spd(N) + |\mathrm{sn}(G/N)| \ |\mathrm{M}(G/N)| \ spd(G/N).\]
Similar techniques have been used by  T\v{a}rn\v{a}uceanu \cite{mt} in order to study
the \textit{subgroup commutativity degree}
\begin{equation}sd(G)=\frac{|\{(X,Y)\in \mathrm{L}(G)^2 \ | \ XY=YX\}|}{|\mathrm{L}(G)|^2}=\frac{1}{|\mathrm{L}(G)|^2}{\underset{(X,Y)\in
\mathrm{L}(G)^2}\sum}\chi(X,Y).\end{equation}  \cite{mt} can be seen as a natural extension, to the context of the lattice theory, of the concept of \textit{commutativity degree}
\begin{equation}d(G)=\frac{|\{(x,y)\in G^2 \ | \ xy = yx\}|}{|G|^2}=\frac{1}{|G|^2}{\underset{x\in
G}\sum}|C_G(x)|,\end{equation} where $C_G(x)=\{g\in G \ | \ gx=xg\}$. There are several contributions on $d(G)$ in \cite{cms,dn,elr,r1,mst,nr,r2,r3}. The main strategy of investigation is to begin with the case of equality at 1 and then  describe the situation, when we leave this extremal case. Upper and lower bounds will measure the distance from known classes of groups. 
For instance, $d(G)=1$ if and only if $G$ is abelian; $sd(G)=1$ if and only if $\mathrm{L}(G)=\mathrm{L}(G)^\perp$. Therefore the next  are   mile stones for the rest of the paper.

\begin{cor}\label{c:2}In a group $G$ we have $spd(G)=1$ if and only if $\mathrm{sn}(G)\subseteq \mathrm{M}^\perp(G)$ or  $\mathrm{M}(G)\subseteq \mathrm{sn}^\perp(G)$.
\end{cor}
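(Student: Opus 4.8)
The plan is to unwind the definition \eqref{e:temp3} and reduce the equality $spd(G)=1$ to a universally quantified permutability statement, then translate that statement into the two lattice inclusions via the definition of the operator $\perp$.

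First I would observe that each summand $\chi(X,Y)$ in \eqref{e:temp3} lies in $\{0,1\}$, so the sum $\sum_{(X,Y)}\chi(X,Y)$ never exceeds the number of pairs $|\mathrm{sn}(G)|\,|\mathrm{M}(G)|$. Hence $spd(G)=1$ holds precisely when the sum attains this maximum, which, since the terms are nonnegative and bounded by $1$, forces $\chi(X,Y)=1$ for \emph{every} pair $(X,Y)\in\mathrm{sn}(G)\times\mathrm{M}(G)$. By the definition of $\chi$, this is exactly the assertion that $XY=YX$ for all $X\in\mathrm{sn}(G)$ and all $Y\in\mathrm{M}(G)$.

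Next I would recast this universal permutability in the language of $\perp$. By definition, $\mathrm{M}^\perp(G)$ consists of the subgroups of $G$ that permute with every maximal subgroup; thus the condition ``$X$ permutes with all $Y\in\mathrm{M}(G)$, for every $X\in\mathrm{sn}(G)$'' is exactly $\mathrm{sn}(G)\subseteq\mathrm{M}^\perp(G)$. Symmetrically, $\mathrm{sn}^\perp(G)$ consists of the subgroups permuting with every subnormal subgroup, so the condition ``$Y$ permutes with all $X\in\mathrm{sn}(G)$, for every $Y\in\mathrm{M}(G)$'' is exactly $\mathrm{M}(G)\subseteq\mathrm{sn}^\perp(G)$.

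Finally, the key point---and the only subtlety---is that permutability is a symmetric relation: $XY=YX$ if and only if $YX=XY$. Consequently the two inclusions $\mathrm{sn}(G)\subseteq\mathrm{M}^\perp(G)$ and $\mathrm{M}(G)\subseteq\mathrm{sn}^\perp(G)$ encode the very same collection of permuting pairs, and are therefore equivalent to each other and both equivalent to $spd(G)=1$. This is why the statement can be phrased with an ``or'': either inclusion already captures the full condition, so the forward and backward implications of the biconditional follow at once. I do not expect a genuine obstacle here; the argument is a direct translation between the counting formula \eqref{e:temp3} and the lattice-theoretic notation, the only thing to be careful about being the symmetry that collapses the two displayed inclusions into a single condition.
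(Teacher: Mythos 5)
Your proposal is correct and is essentially the paper's own argument: the paper's proof is just ``it follows from the above considerations,'' i.e.\ precisely the unwinding of \eqref{e:temp3} and the definition of the operator $\perp$ that you carry out explicitly, including the observation that symmetry of permutability makes the two inclusions equivalent (so the ``or'' is harmless). Your write-up simply makes explicit what the paper leaves implicit.
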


\begin{proof}It follows from the above considerations.
\end{proof}

\begin{cor}\label{c:3}If $G$ is a nilpotent group, then $spd(G)=1$.
\end{cor}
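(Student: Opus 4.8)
The plan is to reduce everything to Corollary \ref{c:2} and then invoke the classical characterization of finite nilpotent groups in terms of their maximal subgroups. By Corollary \ref{c:2}, the equality $spd(G)=1$ holds as soon as one of the inclusions $\mathrm{sn}(G)\subseteq \mathrm{M}^\perp(G)$ or $\mathrm{M}(G)\subseteq \mathrm{sn}^\perp(G)$ is verified. Since the relation ``$XY=YX$'' is symmetric in $X$ and $Y$, these two inclusions both express the single condition that every subnormal subgroup permutes with every maximal subgroup; hence it suffices to establish, say, $\mathrm{M}(G)\subseteq \mathrm{sn}^\perp(G)$.

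First I would recall the standard fact that a finite group is nilpotent if and only if each of its maximal subgroups is normal (see \cite[Section 2.1]{rs} and the discussion of modular and nilpotent groups therein). Thus, under the hypothesis that $G$ is nilpotent, every $Y\in \mathrm{M}(G)$ satisfies $Y\trianglelefteq G$. I would then use the elementary property that a normal subgroup permutes with every subgroup of $G$: if $Y\trianglelefteq G$, then for any $X\in\mathrm{L}(G)$ one has $XY=YX=\langle X,Y\rangle$, because $Y$ absorbs conjugation by elements of $X$. Applying this with $X$ ranging over $\mathrm{sn}(G)$ shows that each maximal subgroup permutes with each subnormal subgroup, i.e. $\mathrm{M}(G)\subseteq \mathrm{sn}^\perp(G)$, and Corollary \ref{c:2} then delivers $spd(G)=1$.

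I do not expect a serious obstacle here: both ingredients---the normality of maximal subgroups in nilpotent groups and the permutability of normal subgroups with arbitrary subgroups---are classical. The only subtlety worth flagging is conceptual rather than technical: one must resist the temptation to argue through the subnormal subgroups themselves (which need not be normal, nor even mutually permutable, in a nilpotent group that fails to be quasihamiltonian), and instead exploit that normality sits on the \emph{maximal} side of the pair, after which the symmetry of the permutability relation does the rest. An alternative route through the Sylow decomposition $G=P_1\times\cdots\times P_k$ together with Corollary \ref{c:1} is available, but it still requires handling each $p$-factor and is no simpler than the uniform argument above.
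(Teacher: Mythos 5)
Your proof is correct and is essentially the paper's own argument: the paper's one-line proof cites Corollary \ref{c:2} together with the chain $\mathrm{M}(G)\subseteq \mathrm{n}(G)\subseteq \mathrm{sn}^\perp(G)$, which is exactly your two ingredients (maximal subgroups of a nilpotent group are normal, and normal subgroups permute with all subgroups) written compactly. Your additional remark that the two inclusions in Corollary \ref{c:2} coincide by symmetry of permutability is a nice observation but does not change the route.
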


\begin{proof}Application of Corollary \ref{c:2}, noting that $\mathrm{M}(G) \subseteq \mathrm{n}(G) \subseteq \mathrm{sn}^\perp(G)$.
\end{proof}




\begin{cor}\label{c:4}In a group $G$ we have $\frac{|\mathrm{sn}(G)| \ |\mathrm{M}(G)|}{|\mathrm{L}(G)|^2} \ spd(G) \le \ sd(G)$ and the equality holds if and only if $\mathrm{sn}(G)=\mathrm{M}(G)=\mathrm{L}(G)$.
\end{cor}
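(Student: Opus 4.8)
The plan is to clear the denominators and reduce both quantities to sums of the indicator $\chi$ introduced in \eqref{e:temp3}. Multiplying the asserted inequality by $|\mathrm{L}(G)|^2$ and substituting the definitions of $spd(G)$ and $sd(G)$, the factor $|\mathrm{sn}(G)|\,|\mathrm{M}(G)|$ on the left cancels against the denominator of $spd(G)$, so that the statement is equivalent to
\[
\sum_{(X,Y)\in \mathrm{sn}(G)\times \mathrm{M}(G)}\chi(X,Y)\ \le\ \sum_{(X,Y)\in \mathrm{L}(G)^2}\chi(X,Y).
\]

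For the inequality I would simply note that $\mathrm{sn}(G)\subseteq \mathrm{L}(G)$ and $\mathrm{M}(G)\subseteq \mathrm{L}(G)$, whence $\mathrm{sn}(G)\times \mathrm{M}(G)\subseteq \mathrm{L}(G)\times \mathrm{L}(G)=\mathrm{L}(G)^2$. Since $\chi$ takes only the values $0$ and $1$, it is non-negative, and summing a non-negative function over a subset cannot exceed the sum over the full index set; this yields the inequality at once.

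For the characterisation of equality I would rewrite the difference of the two sides as $\sum\chi(X,Y)$ taken over the complement $\mathrm{L}(G)^2\setminus(\mathrm{sn}(G)\times \mathrm{M}(G))$, so that equality holds precisely when $\chi(X,Y)=0$ for every pair in this complement. The direction $(\Leftarrow)$ is immediate: if $\mathrm{sn}(G)=\mathrm{M}(G)=\mathrm{L}(G)$ the complement is empty and the two sums agree. The delicate direction is $(\Rightarrow)$, and the key step is to exploit the fact that $\{1\}$ and $G$ permute with every subgroup, so that $\chi(\{1\},Y)=\chi(X,\{1\})=1$ for all $X,Y\in \mathrm{L}(G)$. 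If $\mathrm{M}(G)\neq \mathrm{L}(G)$, choose $Y_{0}\in \mathrm{L}(G)\setminus \mathrm{M}(G)$; then, since $\{1\}\in \mathrm{sn}(G)$, the pair $(\{1\},Y_{0})$ lies in the complement yet satisfies $\chi(\{1\},Y_{0})=1$, a contradiction. Symmetrically, if $\mathrm{sn}(G)\neq \mathrm{L}(G)$, a witness $X_{0}\in \mathrm{L}(G)\setminus \mathrm{sn}(G)$ gives a permuting pair $(X_{0},\{1\})$ in the complement. Hence the complement must be empty, forcing $\mathrm{sn}(G)\times \mathrm{M}(G)=\mathrm{L}(G)^2$ and therefore $\mathrm{sn}(G)=\mathrm{M}(G)=\mathrm{L}(G)$. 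I expect the only real obstacle to be making this last implication airtight, namely verifying that any strict inclusion of either factor produces at least one genuinely permuting pair in the complement; the trivial subgroup and the whole group serve precisely this purpose.
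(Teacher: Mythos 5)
Your proof is correct and takes essentially the same route as the paper: both reduce the inequality to the inclusion $\mathrm{sn}(G)\times\mathrm{M}(G)\subseteq\mathrm{L}(G)^{2}$ and compare the counts of permuting pairs over the two index sets. The only difference is that the paper disposes of the equality characterisation with ``the rest is clear,'' whereas you supply the missing detail --- using the trivial subgroup as a permuting witness $(\{1\},Y_{0})$ or $(X_{0},\{1\})$ in the complement --- which is exactly the right way to make that step airtight.
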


\begin{proof}Since $\mathrm{sn}(G) \times \mathrm{M}(G) \subseteq \mathrm{L}(G)^2$,
$\{(X,Y)\in \mathrm{sn}(G) \times \mathrm{M}(G) \ | \ XY=YX\} \subseteq \{(X,Y)\in \mathrm{L}(G)^2 \ | \ XY=YX\}$ and then
\[
|\mathrm{sn}(G)| \ |\mathrm{M}(G)| \ spd(G)=|\{(X,Y)\in \mathrm{sn}(G) \times \mathrm{M}(G) \ | \ XY=YX\}|\]
\[ \leq |\{(X,Y)\in \mathrm{L}(G)^2 \ | \ XY=YX\}|= |\mathrm{L}(G)|^2 \ sd(G)
\]
from which the inequality follows. The rest is clear.
\end{proof}



Corollary \ref{c:3} clarifies the situation for nilpotent groups. Then we proceed to study solvable groups. Unfortunately, these cannot be described as in \cite[Proposition 2.4]{mt}: Different techniques are necessary. We recall that an abelian group $A$ of order $n=p_1^{n_1} p_2^{n_2} \ldots p_m^{n_m}$, for suitable powers of $p_1,p_2,\ldots,p_m\in \pi(A)$,  has a canonical decomposition of the form $A\simeq A_1 \times A_2 \times \ldots A_m,$ where $n_1, \ldots, n_m$ are positive integers and $A_1,A_2 \ldots, A_m$ are the primary factors. It is well--known that $|\mathrm{L}(A)|=|\mathrm{L}(A_1)|  \cdot |\mathrm{L}(A_2)| \cdot \ldots  \cdot |\mathrm{L}(A_m)|.$ In case $p=p_1=p_2= \ldots=p_m$  \cite[Proposition 3.2]{mtbis} shows that the number of maximal subgroups of the elementary abelian $p$--group $\mathbb{Z}_{p^{\alpha_1}} \times \mathbb{Z}_{p^{\alpha_2}}  \times \ldots \times \mathbb{Z}_{p^{\alpha_k}} $ is equal to $\frac{p^k-1}{p-1}$, for suitable integers $1 \le \alpha_1 \le \alpha_2 \le \ldots \le \alpha_k$ and $k\ge1$.




\begin{lem}\label{l:1} Let $N=\mathbb{Z}_{p^{\alpha_1}} \times \mathbb{Z}_{p^{\alpha_2}}$ be a non--trivial normal abelian subgroup of $G$ with $0\le \alpha_1+ \alpha_2$ and $1 \le \alpha_1 \le \alpha_2$ such that $G/N$ is of prime order and \eqref{a:1}--\eqref{a:2} are satisfied. Then
\[spd(G) \ge \frac{ f(p,\alpha_1,\alpha_2)}{2 \ |\mathrm{sn}(G)| \ |\mathrm{M}(G)|},\]
where $f(p,\alpha_1,\alpha_2)=\frac{1}{p^2-2p+1} \Big( (\alpha_2-\alpha_1+1)p^{\alpha_1+3}+2p^{\alpha_1+2}-
(\alpha_2-\alpha_1-1)p^{\alpha_1+1}-(\alpha_1+\alpha_2-1)p^2-(\alpha_1+\alpha_2+11)p+(\alpha_1+\alpha_2+5)\Big)$ is a polynomial function depending only on $N$.
\end{lem}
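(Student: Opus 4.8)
The plan is to apply the reduction inequality \eqref{lb3} to the pair $(N,H)$, where $H\simeq G/N$ has prime order, and then to evaluate explicitly each of the two summands on its right--hand side. First I would observe that \emph{both} constituents are nilpotent: $N$ is abelian by hypothesis, while $G/N$ is cyclic, being of prime order. Hence Corollary \ref{c:3} gives $spd(N)=spd(G/N)=1$, so the defect terms vanish and \eqref{lb3} collapses to
\[
2\,|\mathrm{sn}(G)|\,|\mathrm{M}(G)|\,spd(G)\ \ge\ |\mathrm{sn}(N)|\,|\mathrm{M}(N)| + |\mathrm{sn}(G/N)|\,|\mathrm{M}(G/N)|.
\]

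Next I would pin down the four lattice cardinalities. Since $G/N$ is of prime order its only subgroups are $\{1\}$ and $G/N$, both subnormal, so $|\mathrm{sn}(G/N)|=2$ and $|\mathrm{M}(G/N)|=1$, contributing the constant $2$ to the bound. Since $N$ is abelian, every subgroup is normal, hence subnormal, so $\mathrm{sn}(N)=\mathrm{L}(N)$ and $|\mathrm{sn}(N)|=|\mathrm{L}(N)|$; moreover $N$ is a rank--two abelian $p$--group, so the count of \cite{mtbis} quoted above (with $k=2$) gives $|\mathrm{M}(N)|=\frac{p^2-1}{p-1}=p+1$. The only remaining ingredient is therefore the \emph{total} number $|\mathrm{L}(N)|$ of subgroups of $N=\mathbb{Z}_{p^{\alpha_1}}\times\mathbb{Z}_{p^{\alpha_2}}$.

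This subgroup count is the heart of the argument. I would establish, by summing the Gaussian--binomial counts of the subgroups of each isomorphism type (or equivalently by the standard recursion on the pair $(\alpha_1,\alpha_2)$ available in the references surrounding \cite{mtbis}), the closed form
\[
|\mathrm{L}(N)| = \frac{(\alpha_2-\alpha_1+1)p^{\alpha_1+2} - (\alpha_2-\alpha_1-1)p^{\alpha_1+1} - (\alpha_1+\alpha_2+3)p + (\alpha_1+\alpha_2+1)}{(p-1)^2},
\]
and I would sanity--check it on the anchor cases $\alpha_1=\alpha_2=1$ (which must give $p+3$) and $\alpha_1=1,\alpha_2=2$ (which must give $2p+4$). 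Substituting this formula together with $|\mathrm{M}(N)|=p+1$ and the $G/N$--contribution $2$ into the displayed inequality, and then collecting powers of $p$ over the common denominator $(p-1)^2=p^2-2p+1$, is intended to produce exactly the numerator $f(p,\alpha_1,\alpha_2)$, whence $spd(G)\ge f(p,\alpha_1,\alpha_2)/\big(2\,|\mathrm{sn}(G)|\,|\mathrm{M}(G)|\big)$, as claimed.

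The main obstacle I anticipate is purely computational rather than structural. Everything conceptual is immediate from the tools already in hand: the reduction to $N$ and $G/N$ is \eqref{lb3}, the vanishing of the two defects is Corollary \ref{c:3}, and the maximal--subgroup count is \cite{mtbis}. The delicate part is producing and certifying the exact closed form for $|\mathrm{L}(N)|$ of a rank--two abelian $p$--group, and then carrying out the polynomial bookkeeping carefully enough that the \emph{low--degree} coefficients (the $p^2$ term, the $p$ term and the constant) land precisely as written in $f$; the leading block $(\alpha_2-\alpha_1+1)p^{\alpha_1+3}+2p^{\alpha_1+2}-(\alpha_2-\alpha_1-1)p^{\alpha_1+1}$ already reflects the product $|\mathrm{L}(N)|\cdot(p+1)$, so it is exactly the additive constants arising from the $G/N$--contribution that require the closest attention.
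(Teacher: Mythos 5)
Your strategy is structurally identical to the paper's proof: apply \eqref{lb3} to the pair $(N,H)$, use Corollary \ref{c:3} to get $spd(N)=spd(G/N)=1$, identify $\mathrm{sn}(N)=\mathrm{L}(N)$, take $|\mathrm{M}(N)|=p+1$, import the closed form for $|\mathrm{L}(N)|$ from \cite[Theorem 3.3]{mtbis} (your formula and both sanity checks, $p+3$ and $2p+4$, are correct), and expand. The proof breaks at the one step you never actually carry out, namely the assertion that the substitution ``produces exactly the numerator $f(p,\alpha_1,\alpha_2)$.'' With your inputs it cannot. You count $|\mathrm{sn}(G/N)|=2$ and $|\mathrm{M}(G/N)|=1$, so the quotient contributes the constant $2$; the paper instead sets $|\mathrm{sn}(G/N)|=|\mathrm{M}(G/N)|=2$ and contributes $4$, and it is this $4$ that feeds the low--degree coefficients of $f$. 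Writing $C_1=\alpha_2-\alpha_1+1$, $C_2=\alpha_2-\alpha_1-1$, $C_3=\alpha_1+\alpha_2+3$, $C_4=\alpha_1+\alpha_2+1$, your numerator is
\[
(p+1)\big(C_1p^{\alpha_1+2}-C_2p^{\alpha_1+1}-C_3p+C_4\big)+2(p-1)^2,
\]
whose low--degree tail is $-(\alpha_1+\alpha_2+1)p^2-6p+(\alpha_1+\alpha_2+3)$, whereas $f$ has tail $-(\alpha_1+\alpha_2-1)p^2-(\alpha_1+\alpha_2+11)p+(\alpha_1+\alpha_2+5)$. The difference between $f$ and your numerator equals $2p^2-(\alpha_1+\alpha_2+5)p+2$, which is positive for all large $p$ (already for $p\ge 5$ when $\alpha_1=\alpha_2=1$). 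So for such $p$ your inequality is strictly weaker than the one claimed: you would prove \emph{a} lower bound, but not the stated bound with $f$.

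Two further remarks. First, your count $|\mathrm{M}(G/N)|=1$ is the one dictated by the standard definition of maximal subgroup (the unique proper subgroup of a prime--order group is the trivial one); the paper's ``$=2$'' implicitly counts $G/N$ itself as an element of that sublattice. The divergence is thus a matter of convention, but since the Lemma asserts the specific polynomial $f$, you are bound either to adopt the paper's count or to show your smaller numerator dominates $f$ --- which, as noted, is false for large $p$. Second, even with the constant $4$, a correct expansion yields the $p$--coefficient $C_4-C_3-8=-10$, not $-(8+C_3)=-(\alpha_1+\alpha_2+11)$: the paper's own bookkeeping drops the term $C_4p$, and $f$ as printed inherits this slip (harmlessly under the paper's convention, since the resulting $f$ is smaller than what the correct expansion gives, so the stated bound still follows a fortiori there). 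Consequently no correct algebra, from either convention, lands exactly on $f$; what your argument legitimately establishes is the bound with the numerator displayed above, and the precise coefficients of $f$ --- the place you yourself flagged as delicate and left unverified --- are exactly where the proposal fails to deliver the statement.
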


\begin{proof} 


We note that $G=HN$, where $G/N\simeq H$ is of prime order, so that it is meaningful to formulate the conditions in \eqref{a:1} and \eqref{a:2}, requiring that they are satisfied.
From \eqref{lb3},
\begin{equation}\label{solv:1}spd(G) \ge \frac{|\mathrm{sn}(N)| \ |\mathrm{M}(N)| spd(N)+|\mathrm{sn}(G/N)| \ |\mathrm{M}(G/N)| spd(G/N) }{2 \ 
|\mathrm{sn}(G)| \ |\mathrm{M}(G)|}\end{equation}
 $|\mathrm{sn}(G/N)|=|\mathrm{M}(G/N)|=2$, by a counting argument, and $spd(N)=spd(G/N)=1$, by Corollary \ref{c:3}, then 
\begin{equation}\label{solv:2}
= \frac{|\mathrm{sn}(N)| \ |\mathrm{M}(N)|}{2 \ |\mathrm{sn}(G)| \ |\mathrm{M}(G)|} +\frac{4}{2 \ |\mathrm{sn}(G)| \ |\mathrm{M}(G)|}
= \frac{1}{2 \ |\mathrm{sn}(G)| \ |\mathrm{M}(G)|} \left( |\mathrm{sn}(N)| \ |\mathrm{M}(N)|  + 4\right)\end{equation} 
\cite[Theorem 3.3]{mtbis} implies $|\mathrm{sn}(N)|=|\mathrm{L}(N)|=\frac{1}{(p-1)^2}[(\alpha_2-\alpha_1+1)p^{\alpha_1+2}-(\alpha_2-\alpha_1-1)p^{\alpha_1+1}-(\alpha_1+\alpha_2+3)p+(\alpha_1+\alpha_2+1)]$, and, as noted above, $|\mathrm{M}(N)|=\frac{p^2-1}{p-1}=p+1$, hence
\begin{equation}\label{solv:3}
= \frac{1}{2 \ |\mathrm{sn}(G)| \ |\mathrm{M}(G)|}  \ \cdot \  \Big(\frac{p+1}{(p-1)^2} \Big((\alpha_2-\alpha_1+1)p^{\alpha_1+2} -(\alpha_2-\alpha_1-1)p^{\alpha_1+1}\end{equation}
\[-(\alpha_1+\alpha_2+3)p+(\alpha_1+\alpha_2+1)\Big)+ 4\Big) \]
in order to write better the above expression we introduce the coefficients
\[C_1=\alpha_2-\alpha_1+1; \ C_2=\alpha_2-\alpha_1-1; \ C_3=\alpha_1+\alpha_2+3; C_4=\alpha_1+\alpha_2+1\]
and then we get
\[
= \frac{1}{2 \ |\mathrm{sn}(G)| \ |\mathrm{M}(G)|} \Big(\frac{1}{(p-1)^2} \  (C_1p^{\alpha_1+3}+(C_1-C_2)p^{\alpha_1+2}-
C_2p^{\alpha_1+1}\]\[+(4-C_3)p^2-(8+C_3)p+(4+C_4)\Big).\] 
Developping  the computations in the brackets, we get the  polynomial $f(p,\alpha_1,\alpha_2)$.
\end{proof}

Lemma \ref{l:1} may be adapted to $sd(G)$ in the following way.

\begin{lem}\label{l:2} Let $N=\mathbb{Z}_{p^{\alpha_1}} \times \mathbb{Z}_{p^{\alpha_2}}$ be a non--trivial normal subgroup of $G$ with $0\le \alpha_1+ \alpha_2$ and $1 \le \alpha_1 \le \alpha_2$ such that $G/N$ is of prime order. Then
\[sd(G) \ge \frac{g(p,\alpha_1,\alpha_2)}{2 \ |\mathrm{L}(G)|^2},\] where $g(p,\alpha_1,\alpha_2)= \frac{1}{(p-1)^4}\Big((\alpha_2-\alpha_1+1)p^{\alpha_1+2}-(\alpha_2-\alpha_1-1)p^{\alpha_1+1}-(\alpha_1+\alpha_2+3)p+(\alpha_1+\alpha_2+1)\Big)^2+ 4$ is a polynomial function depending only on $N$.
\end{lem}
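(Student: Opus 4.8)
The plan is to mirror the proof of Lemma \ref{l:1} almost verbatim, replacing the lattice $\mathrm{sn}(G) \times \mathrm{M}(G)$ by $\mathrm{L}(G)^2$ throughout, and the degree $spd$ by $sd$. First I would record the analogue of the lower bound \eqref{lb3}: since $N \trianglelefteq G$ and $G/N \simeq H$ is of prime order, the embeddings $\mathrm{L}(N) \subseteq \mathrm{L}(G)$ and $\mathrm{L}(H) \subseteq \mathrm{L}(G)$ give
\[
2 \ |\mathrm{L}(G)|^2 \ sd(G) \ge |\mathrm{L}(N)|^2 \ sd(N) + |\mathrm{L}(G/N)|^2 \ sd(G/N),
\]
exactly as \eqref{a:1}--\eqref{a:2} yield \eqref{lb3}, but now both coordinates run over the full subgroup lattice, so each summand is squared rather than a product of two distinct cardinalities. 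This is where the exponent $2$ in $g$ and the factor $(p-1)^4$ come from.

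Next I would evaluate the two pieces. For the quotient, $G/N$ has prime order, so $|\mathrm{L}(G/N)| = 2$ and $sd(G/N) = 1$ (it is cyclic, hence nilpotent, so $\mathrm{L}(G/N) = \mathrm{L}(G/N)^\perp$); this contributes the term $4 = 2^2 \cdot 1$. For the normal subgroup, $N$ is abelian, hence nilpotent, so $sd(N) = 1$ as well, and by \cite[Theorem 3.3]{mtbis} its lattice order is
\[
|\mathrm{L}(N)| = \frac{1}{(p-1)^2}\Big((\alpha_2-\alpha_1+1)p^{\alpha_1+2}-(\alpha_2-\alpha_1-1)p^{\alpha_1+1}-(\alpha_1+\alpha_2+3)p+(\alpha_1+\alpha_2+1)\Big),
\]
the same quantity used in Lemma \ref{l:1}. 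Substituting these values,
\[
2 \ |\mathrm{L}(G)|^2 \ sd(G) \ge |\mathrm{L}(N)|^2 + 4,
\]
and squaring the displayed formula for $|\mathrm{L}(N)|$ produces precisely the polynomial $g(p,\alpha_1,\alpha_2)$, the $(p-1)^2$ in the denominator becoming $(p-1)^4$ and the bracketed expression being squared. Dividing by $2 \ |\mathrm{L}(G)|^2$ gives the claimed bound.

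The routine part is the algebra of squaring the bracket; I would not expand it, since $g$ is stated already in factored-plus-constant form rather than fully developed (unlike $f$ in Lemma \ref{l:1}), so no further simplification is needed. The one step that genuinely requires care, and which I expect to be the main obstacle, is justifying the analogue of \eqref{lb3} for $sd$ without the hypotheses \eqref{a:1}--\eqref{a:2}: the statement of Lemma \ref{l:2} drops those conditions, yet the inequality still needs the two lattice injections $\mathrm{L}(N) \hookrightarrow \mathrm{L}(G)$ and $\mathrm{L}(H) \hookrightarrow \mathrm{L}(G)$ to be compatible with the permutability indicator $\chi$. The first holds automatically because every subgroup of $N$ is a subgroup of $G$ and permutability is inherited; the second holds because $G = HN$ with $H \simeq G/N$, so subgroups of $H$ sit inside $G$ and the correspondence $X/N \leftrightarrow X$ preserves products. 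I would make this compatibility explicit, noting that here the full lattice $\mathrm{L}$ is stable under $\perp$ in a way that $\mathrm{sn}$ and $\mathrm{M}$ are not, which is exactly why the extra assumptions \eqref{a:1}--\eqref{a:2} can be dispensed with in the $sd$ setting.
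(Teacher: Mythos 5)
Your proposal follows the paper's own proof essentially verbatim: the paper likewise observes that, for the full lattice, \eqref{a:1}--\eqref{a:2} become $\mathrm{L}(G/N)^2 \sim \mathrm{L}(H)^2 \subseteq \mathrm{L}(G)^2$ and $\mathrm{L}(N)^2 \subseteq \mathrm{L}(G)^2$ and hence are automatic, then applies \eqref{lb3} in the form \eqref{solv:1bis}, substitutes $|\mathrm{L}(G/N)|=2$, $sd(N)=sd(G/N)=1$ and the value of $|\mathrm{L}(N)|$ from \cite[Theorem 3.3]{mtbis}, and reads off $g(p,\alpha_1,\alpha_2)$ without expanding the square.

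One of your justifications is incorrect as stated, although the conclusion survives: you derive $sd(N)=1$ and $sd(G/N)=1$ from ``abelian/cyclic, hence \emph{nilpotent}''. Nilpotency does not imply $sd=1$; Corollary \ref{c:3} of the paper is a statement about $spd$, not $sd$, and for instance the dihedral group of order $8$ is nilpotent with $sd<1$, since the subgroups generated by two suitable reflections do not permute. What you actually need is only that $N$ and $G/N$ are \emph{abelian}: then every subgroup is normal, so any two subgroups permute, $\mathrm{L}=\mathrm{L}^\perp$, and $sd=1$ follows directly. With that one-word repair your argument coincides with the paper's.
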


\begin{proof} We note that $G=HN$, where $G/N\simeq H$ is of prime order.
 \eqref{a:1} is in this case $\mathrm{L}(G/N)^2 \sim \mathrm{L}(H)^2 \subseteq \mathrm{L}(G)^2$ and is always satisfied. Analogously, \eqref{a:2} becomes $\mathrm{L}(N)^2 \subseteq \mathrm{L}(G)^2$ and is satisfied, too. Then \eqref{lb3} becomes
\begin{equation}\label{solv:1bis}sd(G) \ge \frac{|\mathrm{L}(N)|^2 sd(N)+|\mathrm{L}(G/N)|^2 spd(G/N) }{2 \ |\mathrm{L}(G)|^2}\end{equation}
and, from the assumptions, $|\mathrm{L}(G/N)|=2$, $sd(G/N)=sd(N)=1$ but again \cite[Theorem 3.3]{mtbis} implies $|\mathrm{L}(N)|^2=\frac{1}{(p-1)^4}\Big((\alpha_2-\alpha_1+1)p^{\alpha_1+2}-(\alpha_2-\alpha_1-1)p^{\alpha_1+1}-(\alpha_1+\alpha_2+3)p+(\alpha_1+\alpha_2+1)\Big)^2$. Therefore
\begin{equation}\label{solv:2bis}
=\frac{1}{2 \ |\mathrm{L}(G)|^2} \Big( \frac{1}{(p-1)^4}\Big((\alpha_2-\alpha_1+1)p^{\alpha_1+2}-(\alpha_2-\alpha_1-1)p^{\alpha_1+1}\end{equation} \[-(\alpha_1+\alpha_2+3)p+(\alpha_1+\alpha_2+1)\Big)^2+ 4\Big),\]
where it appears clear what is the polynomial function $g(p,\alpha_1,\alpha_2)$, which we are looking for.
\end{proof}



As usual $Fit(G)$ denotes the  \textit{Fitting subgroup} of $G$. 

\begin{thm}\label{t:1} Let $G$ be a solvable group in which $C=C_G(Fit(G))=\mathbb{Z}_{p^{\alpha_1}} \times \mathbb{Z}_{p^{\alpha_2}}$, for  $0\le \alpha_1+ \alpha_2$, $1\le \alpha_1 \le \alpha_2$, $p$ a prime  and $|G:C|$ a  prime. 
\begin{itemize} \item[(i)]If \eqref{a:1}--\eqref{a:2} are satisfied, then
$spd(G) \ge \frac{ f(p,\alpha_1,\alpha_2)}{2 \ |\mathrm{sn}(G)| \ |\mathrm{M}(G)|},$
where $f(p,\alpha_1,\alpha_2)$ is a polynomial function depending only on $C$.
\item[(ii)] $sd(G) \ge \frac{g(p,\alpha_1,\alpha_2)}{2 \ |\mathrm{L}(G)|^2},$ where $g(p,\alpha_1,\alpha_2)$ is a polynomial function depending only on $C$. 
\end{itemize}
\end{thm}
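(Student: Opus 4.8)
The plan is to recognize Theorem \ref{t:1} as a direct specialization of Lemmas \ref{l:1} and \ref{l:2}, applied to the distinguished normal subgroup $N = C = C_G(Fit(G))$. The two lemmas already perform the essential combinatorial work---the polynomial evaluation of $|\mathrm{L}(N)|$ via \cite[Theorem 3.3]{mtbis} and the reduction through \eqref{lb3}---so the only task remaining is to check that $C$ meets their hypotheses and then invoke them.

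First I would verify that $C$ is a normal subgroup of $G$. Since $Fit(G)$ is characteristic in $G$, and the centralizer of a characteristic (indeed, of any normal) subgroup is itself normal, we obtain $C = C_G(Fit(G)) \trianglelefteq G$. The hypothesis supplies the structure $C = \mathbb{Z}_{p^{\alpha_1}} \times \mathbb{Z}_{p^{\alpha_2}}$ with $1 \le \alpha_1 \le \alpha_2$, so that $C$ is a non--trivial abelian $p$--group of precisely the shape demanded by the two lemmas. Finally, the assumption that $|G:C|$ is prime says exactly that $G/C$ is of prime order, which is the last ingredient both lemmas require.

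For part (i), the standing hypothesis that \eqref{a:1}--\eqref{a:2} hold means that all the conditions of Lemma \ref{l:1} are satisfied with $N = C$; applying it verbatim yields $spd(G) \ge \frac{f(p,\alpha_1,\alpha_2)}{2 \ |\mathrm{sn}(G)| \ |\mathrm{M}(G)|}$, where $f$ depends only on $C$. For part (ii), Lemma \ref{l:2} needs no analogue of \eqref{a:1}--\eqref{a:2} (as observed in its proof, the corresponding inclusions of the full subgroup lattices $\mathrm{L}(N)^2 \subseteq \mathrm{L}(G)^2$ and $\mathrm{L}(G/N)^2 \sim \mathrm{L}(H)^2 \subseteq \mathrm{L}(G)^2$ are automatic), so it applies to $N = C$ with no further assumptions and gives $sd(G) \ge \frac{g(p,\alpha_1,\alpha_2)}{2 \ |\mathrm{L}(G)|^2}$.

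The genuinely delicate content lies entirely inside the two lemmas, so here there is no serious obstacle: the single point deserving care is confirming the normality of $C$ and matching the structural data $(p,\alpha_1,\alpha_2)$ to the hypotheses of Lemmas \ref{l:1} and \ref{l:2}. It is worth remarking that the solvability of $G$ is what makes $C = C_G(Fit(G))$ a natural and non--degenerate object to consider (for a non--trivial solvable group $Fit(G) \ne 1$); beyond guaranteeing this, solvability plays no further role in the two inequalities, which rest solely on $C$ being a normal abelian subgroup of the prescribed type with $G/C$ of prime order.
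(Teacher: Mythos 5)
Your proposal is correct and follows essentially the same route as the paper: both reduce Theorem \ref{t:1} to Lemmas \ref{l:1} and \ref{l:2} applied with $N=C=C_G(Fit(G))$, after noting that $C$ is a normal abelian subgroup of the prescribed shape with $G/C$ of prime order. Your justification of normality (centralizers of characteristic subgroups are normal) is if anything more explicit than the paper's appeal to solvability, but it is the same argument in substance.
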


\begin{proof} Since $G$ is solvable, it is well--known that $C$ is an abelian normal subgroup of $G$. Then our position is correct in assuming  $C=\mathbb{Z}_{p^{\alpha_1}} \times \mathbb{Z}_{p^{\alpha_2}}$, with $0\le \alpha_1+ \alpha_2$, $1\le \alpha_1 \le \alpha_2$, $p$ prime and  $G/C$ is of prime order. Now (i) is an application of Lemma \ref{l:1} and (ii)  of Lemma \ref{l:2}.
\end{proof}

The lower bound in Lemma \ref{l:2} for $sd(G)$ is more precise than the following bound, which was the first to be presented in literature.

\begin{cor}[See \cite{mt}, Corollary 2.6]\label{2.6}
A group $G$ possessing a normal abelian subgroup of prime index has $|\mathrm{L}(G)|^2 \ sd(G)\ge |\mathrm{L}(N)|^2+2|\mathrm{L}(N)| +1$.
\end{cor}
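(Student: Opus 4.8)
The plan is to count permuting pairs directly. Since $|\mathrm{L}(G)|^2\, sd(G)$ is, by definition, the number of ordered pairs $(X,Y)\in \mathrm{L}(G)^2$ with $XY=YX$, it suffices to exhibit a sufficiently large family of pairwise permuting subgroups: from any set of $t$ pairwise permuting subgroups one obtains $t^2$ permuting ordered pairs. Observing that $|\mathrm{L}(N)|^2+2|\mathrm{L}(N)|+1=(|\mathrm{L}(N)|+1)^2$, I would aim for a permuting family of size exactly $|\mathrm{L}(N)|+1$ and then square.

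Concretely, writing $m=|\mathrm{L}(N)|$, I would consider the set $\mathrm{S}=\mathrm{L}(N)\cup\{G\}$. First I would check that $|\mathrm{S}|=m+1$: since $N$ has prime index, it is a proper subgroup, so $G\notin \mathrm{L}(N)$ and the union is disjoint. Next I would verify that any two members of $\mathrm{S}$ permute. If $X,Y\in \mathrm{L}(N)$, then $X,Y\le N$ and, $N$ being abelian, $xy=yx$ for all $x\in X$ and $y\in Y$, whence $XY=YX$. If one of the two members equals $G$, then trivially $GY=YG=G$, so that pair permutes as well. Hence every ordered pair in $\mathrm{S}\times \mathrm{S}$ is a permuting pair of subgroups of $G$, and these $(m+1)^2$ pairs are pairwise distinct because the elements of $\mathrm{S}$ are pairwise distinct.

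It then remains only to assemble the count: the ordered pairs coming from $\mathrm{S}\times\mathrm{S}$ are $|\mathrm{S}|^2=(m+1)^2$ in number and all lie in $\{(X,Y)\in\mathrm{L}(G)^2 \mid XY=YX\}$, so that
\[
|\mathrm{L}(G)|^2\, sd(G)=|\{(X,Y)\in \mathrm{L}(G)^2 \mid XY=YX\}|\ \ge\ (m+1)^2=|\mathrm{L}(N)|^2+2|\mathrm{L}(N)|+1.
\]

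I expect essentially no serious obstacle here: the only points that require care are the disjointness $G\notin\mathrm{L}(N)$, which is exactly what the prime (hence proper) index of $N$ guarantees, and the elementary fact that an abelian subgroup supplies a family of pairwise permuting subgroups. It is worth remarking that normality of $N$ is not actually used in this particular estimate; what drives the bound is the abelianness of $N$ together with the proper containment $N<G$.
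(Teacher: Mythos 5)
Your proof is correct, and there is in fact nothing in the paper to compare it against: the paper states this corollary without proof, quoting it from \cite{mt}, Corollary 2.6. Your argument --- take $\mathrm{S}=\mathrm{L}(N)\cup\{G\}$, note that any two subgroups of the abelian group $N$ permute, that $G$ permutes with every subgroup, and that $G\notin\mathrm{L}(N)$ since $N$ is proper --- is a valid, self-contained and elementary proof of the quoted bound, and your observation that normality of $N$ is never used (only abelianness and properness) is accurate. It is instructive to contrast this with what the paper's own machinery would yield here: the decomposition inequality \eqref{lb3}, specialized to $\mathrm{L}(G)^2$ as in Lemma \ref{l:2}, gives only $|\mathrm{L}(G)|^2\, sd(G)\ge \frac{1}{2}\left(|\mathrm{L}(N)|^2+4\right)$, while Corollary \ref{c:p1} gives only $|\mathrm{L}(G)|^2\, sd(G)\ge 2|\mathrm{L}(N)|$, and both of these additionally presuppose a factorization $G=NH$, i.e.\ a complement to $N$, which need not exist (e.g.\ $G=\mathbb{Z}_4$, $N=\mathbb{Z}_2$). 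Your direct count is stronger than either for every value of $|\mathrm{L}(N)|\ge 1$, because you count the mixed pairs $(X,G)$, $(G,X)$, $(G,G)$ exactly rather than averaging the contributions of $\mathrm{L}(N)^2$ and $\mathrm{L}(G/N)^2$ (which costs a factor $2$), and because the subgroup $G$ itself plays the role of the complement. Incidentally, this also shows that the paper's surrounding remark --- that the bound of Lemma \ref{l:2} is ``more precise'' than this corollary --- should be treated with caution, since $(|\mathrm{L}(N)|+1)^2 > \frac{1}{2}\left(|\mathrm{L}(N)|^2+4\right)$ whenever $|\mathrm{L}(N)|\ge 1$.
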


A different  restriction is obtained when we multiply up \eqref{lb1}--\eqref{lb2}.

\begin{prop}\label{p:1}  Let  $N$ be a normal subgroup of a group $G=NH$ satisfying \eqref{a:1} and \eqref{a:2}. Then \[spd(G)\ge \frac{1}{|\mathrm{sn}(G)| \ |\mathrm{M}(G)|}    \sqrt{{\underset{(Z,T)\in \mathrm{sn}(H) \times \mathrm{M}(H)}{\underset{(X,Y)\in \mathrm{sn}(N) \times \mathrm{M}(N)} \sum}} \chi(X,Y) \ \chi(Z,T)}.\]
\end{prop}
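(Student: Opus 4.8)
The plan is to do exactly what the sentence preceding the statement suggests: multiply the two lower bounds \eqref{lb1} and \eqref{lb2}. First I would record that, by \eqref{e:temp3}, the left-hand sides of \eqref{lb1} and \eqref{lb2} coincide and both equal $|\mathrm{sn}(G)| \ |\mathrm{M}(G)| \ spd(G)$. Abbreviating
\[
A=\sum_{(X,Y)\in \mathrm{sn}(G)\times\mathrm{M}(G)}\chi(X,Y), \quad
B=\sum_{(X,Y)\in \mathrm{sn}(N)\times\mathrm{M}(N)}\chi(X,Y), \quad
C=\sum_{(Z,T)\in \mathrm{sn}(H)\times\mathrm{M}(H)}\chi(Z,T),
\]
the hypotheses \eqref{a:1}--\eqref{a:2} guarantee, through \eqref{lb1} and \eqref{lb2}, that $A\ge B$ and $A\ge C$, while $A=|\mathrm{sn}(G)| \ |\mathrm{M}(G)| \ spd(G)$.

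Next, since $\chi$ takes only the values $0$ and $1$, the three quantities $A,B,C$ are non-negative. Hence multiplying the inequalities $A\ge B\ge 0$ and $A\ge C\ge 0$ gives $A^2\ge BC$, and, taking square roots, $A\ge\sqrt{BC}$.

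Finally I would identify the product $BC$ with the double sum under the radical in the statement. Because the index sets $\mathrm{sn}(N)\times\mathrm{M}(N)$ and $\mathrm{sn}(H)\times\mathrm{M}(H)$ are independent and the summand $\chi(X,Y)\,\chi(Z,T)$ factorizes, one has
\[
BC=\Bigl(\sum_{(X,Y)}\chi(X,Y)\Bigr)\Bigl(\sum_{(Z,T)}\chi(Z,T)\Bigr)={\underset{(Z,T)\in \mathrm{sn}(H) \times \mathrm{M}(H)}{\underset{(X,Y)\in \mathrm{sn}(N) \times \mathrm{M}(N)} \sum}}\chi(X,Y)\,\chi(Z,T).
\]
Substituting $A=|\mathrm{sn}(G)| \ |\mathrm{M}(G)| \ spd(G)$ into $A\ge\sqrt{BC}$ and dividing both sides by $|\mathrm{sn}(G)| \ |\mathrm{M}(G)|$ yields precisely the asserted bound.

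I do not expect a genuine obstacle here: the estimate is an elementary monotonicity consequence of having two one-sided bounds with a common left-hand side, and it does not even require Cauchy--Schwarz. The only points deserving a line of care are checking that \eqref{lb1} and \eqref{lb2} really share the same left-hand side (both being $A$, as forced by \eqref{e:temp3}) and that the double sum factorizes as $BC$; both are immediate. The role of \eqref{a:1}--\eqref{a:2} is simply to ensure that \eqref{lb1} and \eqref{lb2} are available in the first place.
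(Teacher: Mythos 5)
Your proposal is correct and follows essentially the same route as the paper's own proof: combine \eqref{lb1} and \eqref{lb2} to get $A^2 \ge BC$ with $A = |\mathrm{sn}(G)| \, |\mathrm{M}(G)| \, spd(G)$, identify $BC$ with the double sum, and extract square roots. The only (cosmetic) difference is that the paper invokes the ``Cauchy inequality'' to pass from the product of the two sums to the double sum, whereas you correctly observe that this step is an exact factorization requiring no inequality at all.
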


\begin{proof}  From \eqref{lb1}--\eqref{lb2} and the Cauchy inequality for numerical series,
\[ |\mathrm{sn}(G)|^2 \ |\mathrm{M}(G)|^2 \ spd(G)^2\ge  \   \sum_{(X,Y)\in \mathrm{sn}(N) \times \mathrm{M}(N)} \chi(X,Y) \ \cdot \ \sum_{(Z,T) \in \mathrm{sn}(H) \times \mathrm{M}(H)} \chi(Z,T)\]
\begin{equation}\label{lb4} \ge   {\underset{(Z,T)\in \mathrm{sn}(H) \times \mathrm{M}(H)}{\underset{(X,Y)\in \mathrm{sn}(N) \times \mathrm{M}(N)} \sum}} \chi(X,Y) \ \chi(Z,T).
\end{equation}
All are positive quantities and then, extracting the square root, the result follows.
\end{proof}

The next result answers in a certain sense to \cite[Problem 4.1]{mt}.

\begin{cor}\label{c:p1}  Let  $N$ be a normal subgroup of a group $G=NH$. Then 
\[   sd(G)\ge \frac{1}{|\mathrm{L}(G)|^2}    \sqrt{{\underset{(Z,T)\in \mathrm{L}(H)^2}{\underset{(X,Y)\in \mathrm{L}(N)^2} \sum}} \chi(X,Y) \ \chi(Z,T)}.\]
\end{cor}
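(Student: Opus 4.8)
The plan is to obtain Corollary \ref{c:p1} as the specialization of Proposition \ref{p:1} in which the pair of sublattices $\mathrm{sn}(G) \times \mathrm{M}(G)$ is replaced throughout by $\mathrm{L}(G)^2$. The essential observation, already exploited in the proof of Lemma \ref{l:2}, is that with this replacement the two structural constraints \eqref{a:1} and \eqref{a:2} become $\mathrm{L}(G/N)^2 \sim \mathrm{L}(H)^2 \subseteq \mathrm{L}(G)^2$ and $\mathrm{L}(N)^2 \subseteq \mathrm{L}(G)^2$, both of which hold for any normal subgroup $N$ of $G=NH$. Consequently the hypotheses \eqref{a:1}--\eqref{a:2} may be dropped, which is precisely why the statement of the Corollary requires only that $N$ be normal in $G=NH$.

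First I would record the two $sd$--analogues of the fundamental inequalities \eqref{lb1}--\eqref{lb2}. Since $\mathrm{L}(N)^2 \subseteq \mathrm{L}(G)^2$ and $\chi$ takes the same value on a pair whether it is read inside $N$ or inside $G$, one has $\sum_{(X,Y)\in \mathrm{L}(G)^2}\chi(X,Y) \ge \sum_{(X,Y)\in \mathrm{L}(N)^2}\chi(X,Y)$. For the quotient I would invoke the canonical lattice isomorphism $\mathrm{L}(G/N) \sim \mathrm{L}(H)$, noting that it preserves permutability: if $X/N$ and $Y/N$ permute in $G/N$ then $X$ and $Y$ permute in $G$, so each permuting pair in $\mathrm{L}(H)^2$ corresponds to a permuting pair counted in $\mathrm{L}(G)^2$, giving $\sum_{(X,Y)\in \mathrm{L}(G)^2}\chi(X,Y) \ge \sum_{(Z,T)\in \mathrm{L}(H)^2}\chi(Z,T)$.

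Next, recalling that $|\mathrm{L}(G)|^2\, sd(G)=\sum_{(X,Y)\in \mathrm{L}(G)^2}\chi(X,Y)$, I would multiply the two displayed inequalities (all quantities being nonnegative), exactly as in the proof of Proposition \ref{p:1}, to obtain
\[
|\mathrm{L}(G)|^4\, sd(G)^2 \ge \left(\sum_{(X,Y)\in \mathrm{L}(N)^2}\chi(X,Y)\right)\left(\sum_{(Z,T)\in \mathrm{L}(H)^2}\chi(Z,T)\right)= {\underset{(Z,T)\in \mathrm{L}(H)^2}{\underset{(X,Y)\in \mathrm{L}(N)^2} \sum}}\chi(X,Y)\,\chi(Z,T),
\]
the last equality being the standard factorization of a product of two sums over disjoint index sets as a double sum. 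Extracting the (positive) square root then yields the asserted bound.

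The only point that genuinely needs attention — and which I regard as the main obstacle — is the verification in the quotient step that the isomorphism $\mathrm{L}(G/N)\sim\mathrm{L}(H)$ is compatible with the indicator $\chi$, that is, that permutability of subgroups is transported correctly along the correspondence theorem and along $H\simeq G/N$. Once this compatibility is secured, the remainder is a verbatim transcription of the argument for Proposition \ref{p:1}, with no analytic input beyond the nonnegativity of all the terms involved.
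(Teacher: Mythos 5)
Your proposal is correct and follows essentially the same route as the paper: the paper's own proof likewise observes that the $sd$--versions of \eqref{a:1}--\eqref{a:2} are automatically satisfied (exactly as noted in the proof of Lemma \ref{l:2}) and then repeats the argument of Proposition \ref{p:1} verbatim. The compatibility issue you single out as the ``main obstacle'' is in fact immediate, since $\mathrm{L}(H)^2\subseteq \mathrm{L}(G)^2$ with $\chi$ unchanged already yields the quotient-side inequality directly, so no appeal to the correspondence theorem is needed.
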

\begin{proof}Firstly, we note that the corresponding versions of \eqref{a:1} and \eqref{a:2} for $sd(G)$ are always satisfied. Then we argue as in Proposition \ref{p:1}.
\end{proof}

\section{Applications and final considerations}
The symmetric group on 3 elements $S_3=\mathbb{Z}_2 \ltimes \mathbb{Z}_3=\langle a,b \ | \ a^3=b^2=1, b^{-1}ab=a^{-1}\rangle$ has $sd(S_3)=\frac{5}{6}$ (see \cite[p.2510]{mt}), is metabelian and satisfies the description in Theorem \ref{t:1}, since it is an example of a \textit{primitive group of affine type} (see \cite{dm}). This group was the origin of our investigation. In fact, a primitive group $P$ of affine type is a semidirect product with normal factor $Fit(P)$. Furthermore, $Fit(P)$ turns out to be elementary abelian  and  $C_P(Fit(P))=Fit(P)$. This means that Theorem \ref{t:1} gives a good description for the subgroup commutativity degree and for the subgroup S--commutativity degree of such groups. While \cite{dn,elr,r1,nr,r2,r3} show that we may classify a group, when restrictions on $d(G)$ are given, the problem is still open for $sd(G)$ and $spd(G)$. We illustrate  one case, involving $sd(G)$. This is to justify the interest in Section 2 in the new bounds. 

\begin{cor}
A metabelian group $G$ with $|G'|$ and $|G/G'|$ of prime orders is cyclic, whenever the  bound in Corollary \ref{2.6} is achieved with  $sd(G)=\frac{5}{6}$.
\end{cor}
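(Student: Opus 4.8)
The plan is to exploit the very tight arithmetic forced by the two prime-order hypotheses. Since $G$ is metabelian, $G'$ is abelian, and $|G'|$, $|G/G'|$ prime give $|G|=|G'|\,|G/G'|$, a product of two primes. As $|G'|$ is prime we have $G'\neq 1$, so $G$ is non-abelian, hence not cyclic; this already warns that the listed hypotheses can coexist only in a degenerate way, and part of the job of the proof is to make that precise. First I would take $N=G'$: it is a normal abelian subgroup of prime index $|G/G'|$, exactly of the type required by Corollary \ref{2.6}. Because $|G'|$ is prime, $G'$ has only its two trivial subgroups, so $|\mathrm{L}(N)|=|\mathrm{L}(G')|=2$.

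The most direct route to the word \emph{cyclic} is structural. The lower bound $(|\mathrm{L}(N)|+1)^2$ of Corollary \ref{2.6} merely counts the permuting pairs inside $\mathrm{L}(N)\cup\{G\}$; but $N$ is normal, so it permutes with every subgroup of $G$. Hence any subgroup $X\notin \mathrm{L}(N)\cup\{G\}$ would supply an extra permuting pair $(N,X)$ and strictly increase the count. Interpreting ``the bound is achieved'' as equality, I would conclude $\mathrm{L}(G)=\mathrm{L}(N)\cup\{G\}$, so every proper subgroup lies in $N$, i.e. $N$ is the unique maximal subgroup of $G$. A finite group with a unique maximal subgroup is a cyclic group of prime power order, which yields the stated conclusion.

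As an independent check that uses the value $sd(G)=\tfrac{5}{6}$ directly, I would substitute $|\mathrm{L}(N)|=2$ into Corollary \ref{2.6}, getting $|\mathrm{L}(G)|^2\,sd(G)\ge (|\mathrm{L}(N)|+1)^2=9$, impose equality, and insert $sd(G)=\tfrac{5}{6}$ to obtain $|\mathrm{L}(G)|^2=\tfrac{54}{5}$. Since $|\mathrm{L}(G)|$ is a positive integer, its square cannot equal $\tfrac{54}{5}$, a contradiction. This confirms that equality and $sd(G)=\tfrac{5}{6}$ are incompatible for this class (the structural route gives a cyclic group, for which $sd=1\neq\tfrac{5}{6}$, consistent with the contradiction), so the implication ``$G$ is cyclic'' holds by contradiction.

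The step I expect to be the genuine obstacle is not any computation but the reading of ``achieved''. One must notice that the prime-order hypotheses already force $G$ non-abelian, so the only coherent interpretation is the equality-forces-rigidity argument above, and the resulting contradiction is exactly the point: a group realising $sd(G)=\tfrac{5}{6}$ — the value attained by the motivating example $S_3$ — can never meet the classical bound of Corollary \ref{2.6}. In other words, the computation certifies that the estimates of Section 2 strictly improve Corollary \ref{2.6} on this family, which is precisely the illustrative content the statement is meant to convey.
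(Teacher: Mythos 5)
Your proposal is correct and, on the delicate point, more rigorous than the paper's own argument; the two overlap only in their starting equation. The paper's proof is purely numerical: taking $N=G'$ (so $|\mathrm{L}(N)|=2$), it writes $|\mathrm{L}(G)|^2\cdot\frac{5}{6}=9$, solves for $|\mathrm{L}(G)|=\sqrt{56/5}$ (an arithmetic slip; the correct value is $\sqrt{54/5}$), brackets this as $2\le|\mathrm{L}(G)|<4$ to conclude $|\mathrm{L}(G)|\in\{2,3\}$, and finishes with the classification of groups having two or three subgroups (cyclic of prime order, respectively lattice-isomorphic to $C_{p^2}$), hence cyclic in both cases. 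Your numerical branch begins identically but refuses the paper's silent conversion of an unsolvable equation into a range: $|\mathrm{L}(G)|^2=54/5$ is not an integer, so the hypotheses are jointly unsatisfiable and the conclusion holds vacuously --- and your preliminary remark that $|G'|$ of prime order forces $G$ non-abelian, hence non-cyclic, shows the vacuous reading is the only coherent one. Your structural branch has no counterpart in the paper and is the stronger contribution: since all pairs from $\mathrm{L}(N)\cup\{G\}$ permute and the normal subgroup $N$ permutes with every subgroup, equality in Corollary \ref{2.6} forces $\mathrm{L}(G)=\mathrm{L}(N)\cup\{G\}$, so $N$ is the unique maximal subgroup and $G$ is cyclic of prime power order; this argument needs neither the metabelian hypothesis nor the value $\frac{5}{6}$, and it shows that equality in Corollary \ref{2.6} by itself already pins down cyclic $p$-groups, something the paper's bound-chasing cannot see. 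Both routes establish the statement; yours also exposes, rather than hides, the fact that its hypotheses can never be satisfied simultaneously.
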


\begin{proof} We begin from 
$|\mathrm{L}(G)|^2 \ \frac{5}{6} = |\mathrm{L}(N)|^2+2|\mathrm{L}(N)| +1$, which becomes
$|\mathrm{L}(G)|^2 \ \frac{5}{6} = 4+4 +1=9$,  then $2\le|\mathrm{L}(G)|=\sqrt{\frac{56}{5}}=\sqrt{11.2}<4$. This implies either $|\mathrm{L}(G)|=2$ or $|\mathrm{L}(G)|=3$. In the first case, $G$ is cyclic of prime order. In the second case, $G$ is lattice isomorphic to $C_{p^2}$ for a suitable prime $p$. In both cases $G$ is cyclic.
\end{proof}

The control of $|\mathrm{L}(G)|$ was the main ingredient in the previous proof.
Unfortunately, formulas for the growth of $\mathrm{L}(G)$ are  hard to find and  \cite{sh}  helps our investigations. The \textit{M\"obius number} of  $\mathrm{L}(G)$ is a number which allows us to control the size of $|\mathrm{L}(G)|$. In case of a symmetric group $S_n$, it is denoted by $\mu(1,S_n)$ and was conjectured to be $(-1)^{n-1} \ (|\mathrm{Aut}(S_n)|/2)$ for all $n>1$ (see \cite[p.1]{sh}). For $n\leq 11$, this was proved by H. Pahlings. Recent progresses are summarized below.

\begin{thm}[See \cite{sh}, Theorems 1.6, 1.8, 1.10]\label{t:sh}\
\begin{itemize}
\item[(i)] Let $p$ be a prime. Then $\mu(1,S_p)=(-1)^{p-1} \ \frac{p!}{2}$.
\item[(ii)]Let $n=2p$ and $p$ be an odd prime. Then
\[\mu(1,S_n)=\left\{\begin{array}{lcl} -n!,&\,\,& \mathrm{if} \ n-1  \ \mathrm{is \ prime \ and} \ p \equiv 3\mod 4,\\
\frac{n!}{2},&\,\,& \mathrm{if} \ n=22,\\
-\frac{n!}{2},&\,\,& \mathrm{otherwise.}
\end{array}\right.\]
\item[(iii)]  Let $n=2^a$ for an integer $a\geq 1$. Then $\mu(1,S_n)=-\frac{n!}{2}$.
\end{itemize}
\end{thm}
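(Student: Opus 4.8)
The plan is to compute each of the three Möbius numbers through Philip Hall's theorem, which identifies $\mu(1,G)$ with the alternating sum $\sum_k (-1)^k c_k$ over chains $1 = H_0 < H_1 < \cdots < H_k = G$ in $\mathrm{L}(G)$, equivalently with the reduced Euler characteristic of the order complex of the poset $\overline{\mathrm{L}}(G)$ of proper nontrivial subgroups. This converts all three statements into a combinatorial--topological analysis of subgroup posets of $S_n$, for which the crucial structural input is the O'Nan--Scott classification of maximal subgroups of $S_n$ into intransitive, imprimitive, and primitive types. Organizing the chain count by the maximal subgroup through which each chain enters $S_n$, the computation splits according to these three families.

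First I would treat (i). For $p$ prime the subgroups of $S_p$ whose order is divisible by $p$ are tightly constrained: a Sylow $p$-subgroup $P = \langle (1\,2\,\cdots\,p)\rangle$ has normalizer $N_{S_p}(P) = \mathrm{AGL}(1,p) = \mathbb{Z}_p \rtimes \mathbb{Z}_{p-1}$, and every transitive subgroup of prime degree lies in a short known list. Using the standard reduction of the Möbius number to subgroups meeting a fixed Sylow subgroup (so that only subgroups of order divisible by $p$ contribute), the chain sum collapses onto the interval above $P$ together with $A_p$ and the point stabilizers $S_{p-1}$. Evaluating the resulting alternating sum should yield $(-1)^{p-1}|A_p| = (-1)^{p-1}\,p!/2$, which matches $(-1)^{p-1}|\mathrm{Aut}(S_p)|/2$ since $\mathrm{Aut}(S_p) = S_p$ for $p \neq 6$.

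For (ii) and (iii) I would argue inductively. The intransitive maximal subgroups $S_k \times S_{n-k}$ and the imprimitive ones $S_k \wr S_{n/k}$ have intervals in $\mathrm{L}(S_n)$ governed by refinement/partition lattices and by symmetric groups $S_m$ with $m < n$, so their total contribution is controlled by the product formula for Möbius functions together with the inductive hypothesis. After peeling these off, what remains is the contribution of the primitive maximal subgroups. For $n = 2^a$ the $2$-power degree forces a clean recursion through the imprimitive chain $S_2 \wr S_{2^{a-1}}$ and leaves essentially only the affine primitive group $\mathrm{AGL}(a,2)$ to account for, giving the uniform value $-n!/2$. For $n = 2p$ the surviving primitive groups depend on arithmetic of $n$: the exceptional outputs $-n!$ when $n-1$ is prime with $p \equiv 3 \bmod 4$, and $+n!/2$ at $n = 22$, arise precisely from sporadic primitive groups of degree $2p$ whose Möbius contributions fail to cancel against the rest.

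The main obstacle is exactly this primitive contribution. The intransitive and imprimitive parts are organized by recursive combinatorial lattices and yield to induction plus the product formula, so they are bookkeeping; but the primitive maximal subgroups carry no such recursive structure and must be enumerated case by case, ultimately invoking the classification of finite simple groups to know them all. Isolating which primitive groups survive cancellation — and thereby pinning down the exceptional degrees and the parity dichotomy in (ii) — is the delicate heart of the argument, while (i) and (iii) are comparatively rigid because primality and $2$-power degree leave very few primitive groups in play.
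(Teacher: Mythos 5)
You should know at the outset that the paper contains no proof of this statement: Theorem \ref{t:sh} is imported verbatim, with attribution, from Shareshian \cite{sh} (his Theorems 1.6, 1.8 and 1.10), and is used in the paper only as an external input to Corollary \ref{bound}. So there is no argument in the paper to compare yours against; the only meaningful benchmark is Shareshian's own proof, whose general architecture -- M\"obius number as an alternating chain count (reduced Euler characteristic of the order complex), contributions organized by the intransitive/imprimitive/primitive trichotomy of overgroups, and CFSG-based lists of primitive groups of the relevant degrees -- your outline does echo, although Shareshian's actual mechanism leans on lattice-theoretic tools (Crapo complementation, Quillen-type fiber arguments) rather than a direct Hall chain sum.

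Judged as a proof, however, your proposal has genuine gaps, and they sit exactly where you yourself locate ``the delicate heart.'' Nothing is ever evaluated: each of (i), (ii), (iii) ends in ``should yield'' or ``must be enumerated case by case,'' so the exceptional values in (ii) and the uniform answers in (i) and (iii) are asserted, not derived. Worse, the collapses you claim are false as stated. In (i), the transitive subgroups of prime degree $p$ are not just subgroups of $\mathrm{AGL}(1,p)$ together with $A_p$, $S_p$ and point stabilizers: by Burnside plus CFSG they also include $\mathrm{PSL}(d,q)$ whenever $(q^d-1)/(q-1)=p$, as well as $\mathrm{PSL}(2,11)$, $M_{11}$ and $M_{23}$, and the uniform value $(-1)^{p-1}p!/2$ holds precisely because all of these extra contributions cancel -- that cancellation is the theorem, not bookkeeping. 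Likewise in (iii), degree $2^a$ carries many primitive groups besides $\mathrm{AGL}(a,2)$ (for $n=8$: $\mathrm{PSL}(2,7)$ and $\mathrm{PGL}(2,7)$; for $n=16$: $2^4{:}A_7$, $\mathrm{A\Gamma L}(2,4)$, among others; for $n=32$: $\mathrm{PSL}(2,31)$, $\mathrm{PGL}(2,31)$), and in (ii) the exceptional outputs come from $\mathrm{PGL}(2,2p-1)$ when $2p-1$ is prime and from $M_{22}$ at $n=22$; your sketch names these phenomena but supplies no mechanism deciding which contributions survive. In short, you have correctly reconstructed the strategy of \cite{sh}, but not a proof; for the purposes of this paper the statement is simply a citation, and a self-contained argument would have to carry out the chain-sum evaluations and the primitive-group case analysis that your plan defers.
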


 Let $\mu(1,G)\in\{\mu(1,S_p),\mu(1,S_n)\}$, being $\mu(1,S_p)$ and $\mu(1,S_n)$ the values in Theorem \ref{t:sh} under the given restrictions on $n$ and $p$. In a certain sense, the following result specifies our considerations on $S_3$, when we have an arbitrary primitive group of affine type for which the subgroup lattice is growing as  $S_n$.


\begin{cor}\label{bound}
Under the assumptions of Theorem \ref{t:1}, let $G$ be a solvable group such that $|\mathrm{L}(G)|=\mu(1,G)$. Then
$sd(G) \ge \frac{g(p,\alpha_1,\alpha_2)}{2 \ \mu(1,G)^2},$ where $g(p,\alpha_2,\alpha_2)$ is a polynomial function depending only on $C_G(Fit(G))$.
\end{cor}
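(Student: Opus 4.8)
The plan is to observe that this corollary is an immediate specialization of Theorem \ref{t:1}(ii), obtained by invoking the extra hypothesis $|\mathrm{L}(G)|=\mu(1,G)$ in the denominator of the bound already established there. First I would recall that, under the standing assumptions of Theorem \ref{t:1} (namely $G$ solvable with $C=C_G(Fit(G))=\mathbb{Z}_{p^{\alpha_1}}\times\mathbb{Z}_{p^{\alpha_2}}$, $0\le\alpha_1+\alpha_2$, $1\le\alpha_1\le\alpha_2$, $p$ prime and $|G:C|$ prime), part (ii) of that theorem already gives the lower bound $sd(G)\ge\frac{g(p,\alpha_1,\alpha_2)}{2\ |\mathrm{L}(G)|^2}$, where $g$ is the polynomial produced in Lemma \ref{l:2} and depends only on $C$. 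Nothing about the function $g$ needs to be recomputed, since its explicit form was already extracted in \eqref{solv:2bis}.

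The second and final step is the substitution. Since $|\mathrm{L}(G)|=\mu(1,G)$ forces $|\mathrm{L}(G)|^2=\mu(1,G)^2$, replacing the denominator $|\mathrm{L}(G)|^2$ by $\mu(1,G)^2$ in the bound above yields $sd(G)\ge\frac{g(p,\alpha_1,\alpha_2)}{2\ \mu(1,G)^2}$ exactly as claimed, with the same polynomial $g$ still depending only on $C=C_G(Fit(G))$. In this sense the corollary is a purely bookkeeping consequence of Theorem \ref{t:1}(ii) together with the hypothesis linking $|\mathrm{L}(G)|$ to the M\"obius number of Theorem \ref{t:sh}.

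The only point deserving a word of care — and the nearest thing to an obstacle — is the legitimacy of the identification $|\mathrm{L}(G)|=\mu(1,G)$ itself: the left-hand side is a positive integer, whereas the M\"obius number $\mu(1,G)$ drawn from Theorem \ref{t:sh} may be negative (for instance $\mu(1,S_2)=-1$, and $\mu(1,S_n)=-\frac{n!}{2}$ for $n=2^a$). What the argument genuinely uses is only the squared identity $|\mathrm{L}(G)|^2=\mu(1,G)^2$, so the hypothesis may equally be read as $|\mathrm{L}(G)|=|\mu(1,G)|$; under either reading the substitution is valid and the sign of $\mu(1,G)$ is immaterial to the final inequality, since $\mu(1,G)$ enters only through its square. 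Hence I expect the proof to reduce to a one-line application of Theorem \ref{t:1}(ii) followed by this substitution, with no residual computation.
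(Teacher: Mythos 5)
Your proposal is correct and coincides with the paper's intended argument: the paper states Corollary \ref{bound} without any written proof, precisely because it is the immediate substitution of $|\mathrm{L}(G)|=\mu(1,G)$ into the bound of Theorem \ref{t:1}(ii) that you describe. Your side remark that the hypothesis should really be read as $|\mathrm{L}(G)|=|\mu(1,G)|$ (since $\mu(1,G)$ can be negative, e.g.\ $\mu(1,S_n)=-\frac{n!}{2}$ for $n=2^a$, while only its square enters the bound) is a valid observation that the paper glosses over.
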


\section*{Acknowledgement}
The second author thanks  D.E. Otera  and the Universit\'e Paris-Sud 11 for the  hospitality  during the month
of May 2010, in which the significant part of the present project has been done. We are also grateful to some colleagues, 
who  detected two fundamental problems in the original version of the manuscript, allowing us to enlarge our perspectives of study in the present version.

\end{document}